\newtheorem{theorem}{Theorem}[section]
\newtheorem{lemma}{Lemma}[section]
\newtheorem{corollary}{Corollary}[section]
\theoremstyle{definition}
\newcommand{\keywords}{\textbf{Keywords: }\medskip}
\newcommand{\subjclass}{\textbf{Mathematics Subject Classification (2010):}\medskip}
\renewcommand{\abstract}{\textbf{Abstract.}\medskip}
\begin{document}


\title{Jackson-type inequalities and widths of functional classes
  in the Musielak-Orlicz type spaces\thanks{This work was supported in part  by the Kyrgyz-Turkish Manas
University (Bishkek / Kyrgyz Republic),  project No.~KTM\"{U}-BAP-2019.FBE.02, the Ministry of Education and Science of Ukraine in the framework of the fundamental research No.~0118U003390  and the Volkswagen Foundation (VolkswagenStiftung), program ``From Modeling and Analysis to Approximation''.}}

\author{F.\,G.~Abdullayev, S.\,O.~Chaichenko, M.~Imashkyzy, A.\,L.~Shidlich}

\date{}

\maketitle

\begin{abstract}

In the Musielak-Orlicz type spaces ${\mathcal S}_{\bf M}$,   exact Jackson-type inequalities are obtained in terms of
best approximations of functions and  the averaged values of their generalized moduli of smoothness. The  values of 
Kolmogorov, Bernstein, linear, and projective widths in ${\mathcal S}_{\bf M}$ are found for classes of periodic functions defined by certain conditions on the averaged values of the generalized moduli of smoothness.

\end{abstract}

\keywords{Kolmogorov width, Bernstein width, best approximation, module of smoothness, Jackson-type inequality, Musielak-Orlicz spaces.}

\subjclass{\, 41A17	\and 42A32}



\section{Introduction}

Let ${\bf M}=\{M_k(t)\}_{k\in {\mathbb Z}}$, $t\ge 0$, be a sequence of Orlicz functions. In other words, for every $k\in {\mathbb Z}$, the
function $M_k(t)$ is a nondecreasing convex function for which $M_k(0)=0$ and $M_k(t)\to \infty$ as $t\to \infty$.
The modular space (or  Musilak-Orlicz  type space) ${\mathcal S}_{\bf M}$  is the space of $2\pi$-periodic  complex-valued Lebe\-sgue summable functions $f$ ($f\in L$)   such that
the following quantity (which is also called the Luxemburg norm of $f$) is finite:
\begin{equation}\label{def_Lux_norm}
    \|{f}\|_{_{\scriptstyle  {\bf M}}}:=
    \|\{\widehat{f}(k)\}_{k\in {\mathbb Z}}\|_{_{\scriptstyle l_{\bf M}({\mathbb Z})}}:=
    \inf\bigg\{a>0:\  \sum\limits_{k\in\mathbb Z}  M_k(|{\widehat{f}(k)}|/{a})\le 1\bigg\},
\end{equation}
where $\widehat{f}(k):={[f]}\widehat{\ \ }(k)={(2\pi)^{-1}}\int_0^{2\pi}f(x) \mathrm{e}^{- \mathrm{i}kx}\mathrm{d}x$,
$k\in\mathbb Z$, are the Fourier coefficients of   the function
$f$.

 The spaces ${\mathcal S}_{\bf M}$ defined in this way are Banach spaces.  Functional  spaces of this type have been studied by mathematicians since the 1930s (see, for example, the monographs
  \cite{Lindenstrauss-1977}, \cite{Musielak-1983}, \cite{Rao_Ren_2002}). If all functions $M_k$ are identical (namely, $M_k(t)\equiv M(t)$,
 $k\in {\mathbb Z}$), the spaces ${\mathcal S}_{\bf M}$ coincide with the ordinary Orlicz type spaces ${\mathcal S}_{M}$
 \cite{Chaichenko_Shidlich_Abdullayev_2019}. If $M_k(t)=\mu_k t^{p_k}$,  $p_k\ge 1$, $\mu_k\ge 0 $, then ${\mathcal S}_{\bf M}$ coincide with the weighted spaces
${\mathcal S}_{_{\scriptstyle  \mathbf p,\,\mu}}$ with variable exponents \cite{Abdullayev_Chaichenko_Imash_kyzy_Shidlich_2020}.
If all $M_k(t)=t^p$, $p\ge 1$, then the spaces ${\mathcal S}_{\bf M}$ are the  known
spaces ${\mathcal S}^{p}$ (see, for example, \cite[Ch.~11]{Stepanets_M2005}),
which in the case $p=2$ coincide with ordinary Lebesgue spaces ${\mathcal S}^{2}=L_2$.


In the paper, we study the approximative properties of the spaces ${\mathcal S}_{\bf M}$. In particular,
 exact Jackson-type inequalities in ${\mathcal S}_{\bf M}$ are obtained in terms of
the best approximations of functions and the averaged values of their generalized moduli of smoothness. The  values of 
Kolmogorov, Bernstein, linear, and projective widths in the spaces ${\mathcal S}_{\bf M}$ are found for classes of periodic functions defined by certain conditions on the averaged values of the generalized moduli of smoothness.

Jackson-type (or Jackson-Stechkin-type) inequalities are inequalities  that estimate  the values of the best approximations
of functions via the value  of their modulus of continuity (smoothness) at a certain point.
The first exact Jackson-type inequality for the best uniform approximations of $2\pi$-periodic continuous functions  by trigonometric polynomials was obtained by Korneichuk \cite{Korneichuk_1962} in 1962. A similar result for the best uniform approximations of continuous functions given on the real axis by entire functions of the exponential type
was obtained by Dzyadyk in \cite{Dzyadyk_1975}. In 1967,   Chernykh \cite{Chernykh_1967_1, Chernykh_1967_2} proved two unimprovable inequalities for  $2\pi$-periodic functions from the Lebesgue spaces $L_2$. In \cite{Chernykh_1967_2},  it was shown in particular that the averaged values of the moduli of smoothness can be more effective for characterizing the structural and approximative properties of the functions $f$ than the moduli themselves. In  \cite{Taikov_1976, Taikov_1979}  (see also \cite[Ch. 4]{Pinkus_1985}), Taikov originated systematic investigations of the problem of exact inequalities that estimate  the values of the best approximations of functions via the averaged values of their  moduli of smoothness. He first considered the
functional classes  of  $2\pi$-periodic  functions  defined by certain conditions on the averaged values of their moduli of
smoothness and found the exact values of the widths of such classes  in the spaces  $L_2$.
Later, similar topic was studied by  numerous mathematicians in various  functional  spaces  (see, for example,
\cite{Stepanets_Serdyuk_2002, Vakarchuk_2004, Israflov_Guven_2006, Akgun_Israflov_2011, Akgun_Kokilashvili_2011, Sharapudinov_2013, Babenko_Konareva_2019, Akgun_2019, Abdullayev_Serdyuk_Shidlich_2021}, etc.).
For more detailed information on the results obtained in this direction and the corresponding references, see also \cite{Shabozov_Yusupov_2011, Vakarchuk_2012, Vakarchuk_2016, Babenko_Konareva_2019}.


\section{Preliminaries }

\subsection{Orlicz  norm}

In addition  to the Luxemburg norm (\ref{def_Lux_norm}) of the space $
{\mathcal S}_{\bf M}$, consider  the
Orlicz norm that is defined as follows. Let
 ${\bf {M^*}}=\{{M^*_k}(v)\}_{k\in {\mathbb Z}}$, $v\ge 0$, be the sequence of  functions defined by the relations
 \[
    {M^*_k}(v):=\sup\{uv-M_k(u): ~u\ge 0\}, \quad k \in \mathbb{Z}.
 \]
Consider the set $\Lambda{=}\Lambda({\bf {M^*}})$ of 
sequences of positive numbers $\lambda=\{\lambda_k\}_{k\in \mathbb{Z}}$
such that  $\sum_{k\in \mathbb{Z}}{M^*_k}(\lambda_k){\le} 1$. For any function  $f\in {\mathcal S}_{\bf M}$, define its Orlicz norm by the equality
\begin{equation} \label{def-Orlicz-norm}
    \|f\|^\ast_{_{\scriptstyle  {\bf M}}}:=\|\{\widehat{f}(k)\}_{k\in {\mathbb Z}}\|_{_{\scriptstyle l_{\bf M}^*({\mathbb Z})}}:= \sup \Big\{ \sum\limits_{k \in \mathbb{Z}}
    \lambda_k|\widehat{f}(k) |: \quad  \lambda\in \Lambda({\bf {M^*}})\Big\}.
\end{equation}

Further, we will mainly use the Orlicz norm for functions $f \in {\mathcal S}_{\bf M}$. However, taking into account the following Lemma \ref{Lemma_3}, some corollaries can also be formulated from the results obtained when considering the Luxemburg norm.

\begin{lemma}
      \label{Lemma_3}
      For any function $f \in {\mathcal S}_{\bf M}$, the following relation holds:
      \begin{equation} \label{estim-for-norms}
      \| f\|_{_{\scriptstyle  {\bf M}}} \le \| f\|^*_{_{\scriptstyle  {\bf M}}}\le 2 \,\| f\|_{_{\scriptstyle  {\bf M}}}.
      \end{equation}
\end{lemma}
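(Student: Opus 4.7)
The plan is to treat the two inequalities in~(\ref{estim-for-norms}) separately. The upper bound $\|f\|^*_{_{\scriptstyle {\bf M}}} \le 2\|f\|_{_{\scriptstyle {\bf M}}}$ will follow from a single application of Young's inequality for the Legendre-conjugate pair $(M_k, {M^*_k})$, while the lower bound $\|f\|_{_{\scriptstyle {\bf M}}} \le \|f\|^*_{_{\scriptstyle {\bf M}}}$ I shall establish by contradiction, exploiting the sublinearity of the convex functions ${M^*_k}$ at the origin.

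For the upper bound, I set $a := \|f\|_{_{\scriptstyle {\bf M}}}$ and first verify that $\sum_{k\in\mathbb{Z}} M_k(|\widehat{f}(k)|/a) \le 1$: by~(\ref{def_Lux_norm}) this sum is $\le 1$ for every $a+\varepsilon$, and monotone convergence (justified by continuity of each convex $M_k$ on $(0,\infty)$ together with $M_k(0+)=0$) lets me pass to $\varepsilon \downarrow 0$. Then for any $\lambda \in \Lambda({\bf {M^*}})$, Young's inequality $\lambda_k (|\widehat{f}(k)|/a) \le M_k(|\widehat{f}(k)|/a) + {M^*_k}(\lambda_k)$ summed over $k$ gives
\[
\frac{1}{a}\sum_{k} \lambda_k |\widehat{f}(k)| \;\le\; \sum_{k} M_k(|\widehat{f}(k)|/a) + \sum_{k} {M^*_k}(\lambda_k) \;\le\; 1 + 1,
\]
so taking the supremum over $\lambda \in \Lambda({\bf {M^*}})$ in~(\ref{def-Orlicz-norm}) yields $\|f\|^*_{_{\scriptstyle {\bf M}}} \le 2a$.

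For the lower bound, assume toward contradiction that $\|f\|_{_{\scriptstyle {\bf M}}} > b := \|f\|^*_{_{\scriptstyle {\bf M}}}$, so that $\sum_k M_k(|\widehat{f}(k)|/b) > 1$. Then there exist $\varepsilon>0$ and a finite $K$ with $\sum_{|k|\le K} M_k(|\widehat{f}(k)|/b) > 1+\varepsilon$. Using the Legendre-conjugate description $M_k(u) = \sup_{v\ge 0}(uv - {M^*_k}(v))$, I pick, for each $|k|\le K$, a value $v_k \ge 0$ realizing the supremum to within $\varepsilon/(2K+1)$, and set $v_k = 0$ for $|k|>K$; writing $s := \sum_k {M^*_k}(v_k)$, summation delivers $\sum_k v_k |\widehat{f}(k)| > b(1+s)$. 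The decisive step is then to place $(v_k)$, or a scaled version, inside $\Lambda({\bf {M^*}})$: if $s \le 1$ the sequence is already admissible and the preceding inequality gives $\sum_k v_k|\widehat{f}(k)|>b$, contradicting~(\ref{def-Orlicz-norm}); if $s>1$ I rescale $\lambda_k := v_k/s$, and the sublinearity ${M^*_k}(tv) \le t{M^*_k}(v)$ for $t\in[0,1]$---a consequence of convexity of ${M^*_k}$ together with ${M^*_k}(0)=0$---forces $\sum_k {M^*_k}(\lambda_k) \le 1$, while $\sum_k \lambda_k |\widehat{f}(k)| = (1/s)\sum_k v_k|\widehat{f}(k)| > (b/s)(1+s) = b(1+1/s) > b$ yields the same contradiction. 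The case $s>1$ is where I expect the main friction; once the homogeneity-like bound ${M^*_k}(tv)\le t{M^*_k}(v)$ near zero is isolated, both halves of~(\ref{estim-for-norms}) fall out cleanly.
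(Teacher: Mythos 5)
Your proof is correct: the paper gives no argument of its own for Lemma~\ref{Lemma_3}, deferring instead to the classical equivalence of the Luxemburg and Orlicz norms in modular sequence spaces \cite[Ch.~4]{Lindenstrauss-1977}, and your two-sided argument (Young's inequality $uv\le M_k(u)+M^*_k(v)$ for the upper bound; the biconjugation formula $M_k(u)=\sup_{v\ge 0}(uv-M^*_k(v))$ together with the rescaling $\lambda_k=v_k/s$ and the bound $M^*_k(v/s)\le M^*_k(v)/s$ for the lower bound) is precisely the standard proof that the citation points to. The only cosmetic point is that your near-optimal sequence $(v_k)$ vanishes for $|k|>K$ whereas $\Lambda({\bf {M^*}})$ is declared to consist of positive sequences; this is harmless, since each $M^*_k$ is continuous at $0$ with $M^*_k(0)=0$, so the supremum in (\ref{def-Orlicz-norm}) is unaffected by admitting zero entries.
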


Relation (\ref{estim-for-norms}) follows from the similarly relation for corresponding norms in the modular Orlicz sequence  spaces (see, for example \cite[Ch. 4]{Lindenstrauss-1977}).



\subsection{Generalized moduli of smoothness and their averaged values}

Let $\omega_\alpha(f,\delta)$ be the modulus of smoothness of a function  $f \in {\mathcal S}_{\bf M}$ of order
$\alpha>0$, i.e.,
\begin{equation}\label{usual_modulus}
    \omega_\alpha(f,t)_{_{\scriptstyle  {\bf M}}}^*:=
    \sup\limits_{|h|\le t}\|\Delta_h^\alpha f\|_{_{\scriptstyle  {\bf M}}}^*=
     \sup\limits_{|h|\le t} \Big\|\sum\limits_{j=0}^\infty (-1)^j {\alpha \choose j} f(\cdot-jh)
     \Big\|_{_{\scriptstyle  {\bf M}}}^*,
\end{equation}
where ${\alpha \choose j}=\frac {\alpha(\alpha-1)\cdot\ldots\cdot(\alpha-j+1)}{j!}$ for $j \in \mathbb{N}$ and ${\alpha \choose j}=1$ for $j=0$. By the definition, for any ${k}\in {\mathbb Z}$, we have
\begin{equation}\label{modulus_difference_Fourier_Coeff}
|{[\Delta_h^\alpha f]}\widehat {\ \ }(k)|=|1-\mathrm{e}^{-\mathrm{i}kh}|^\alpha |\widehat{f}(k)|
=2^\frac \alpha 2 (1-\cos{kh})^\frac \alpha2 |\widehat{f}(k)|.
\end{equation}
Consider the set $\Phi$ of all continuous bounded nonnegative
pair functions $\varphi$ such that
$\varphi(0)=0$ and the Lebesgue measure of the set $\{t\in {\mathbb R}:\,\varphi(t)=0\}$ is equal to zero. For a fixed function $\varphi\in \Phi$, $h\in {\mathbb R}$ and for any $f \in {\mathcal S}_{\bf M}$, we denote by
$\{{[\Delta_h^\varphi f]}\widehat {\ \ }(k)\}_{k\in {\mathbb Z}}$ the sequence of numbers such that  for any $k\in {\mathbb Z}$
  \begin{equation}\label{modulus_generalize difference_Fourier_Coeff}
  {[\Delta_h^\varphi f]}\widehat {\ \ }(k)  =\varphi(kh)  \widehat{f}(k).
\end{equation}
If there exists a function $\Delta_h^\varphi f\in L$ whose Fourier coefficients coincide with the numbers
${[\Delta_h^\varphi f]}\widehat {\ \ }(k)$, $k\in {\mathbb Z}$, then, as above,
the expression  $\|\Delta_h^\varphi f\|^*_{_{\scriptstyle  {\bf M}}}$ denotes  the  Orlicz norm of the function
 $\Delta_h^\varphi f$.
 If such a function does not exist, then
 the  notation   $\|\Delta_h^\varphi f\|^*_{_{\scriptstyle  {\bf M}}}$ denotes the  norm  $\|\cdot\|_{_{\scriptstyle l_{\bf M}^*({\mathbb Z})}}$ of the sequence $\{{[\Delta_h^\varphi f]}\widehat {\ \ }(k)\}_{k\in {\mathbb Z}}$.

Similary to  \cite{Shapiro_1968}, \cite{Boman_Shapiro_1971}, \cite{Boman_1980},
define  the generalized  modulus of smoothness   of a function $f \in {\mathcal S}_{\bf M}$  by the equality
\begin{equation}\label{general_modulus}
    \omega_\varphi(f,t)_{_{\scriptstyle  {\bf M}}}^*=\sup\limits_{|h|\le t} \|\Delta_h^\varphi f\|_{_{\scriptstyle  {\bf M}}}^*.
 \end{equation}
It follows from  (\ref{modulus_difference_Fourier_Coeff}) that $\omega_\alpha(f,t)_{_{\scriptstyle  {\bf M}}}^*\!=\omega_\varphi(f,t)_{_{\scriptstyle  {\bf M}}}^*$  when
$\varphi(t)\!=\varphi_\alpha(t)=\!2^\frac \alpha 2 (1-\cos{kt})^\frac \alpha2$\!. In the general case, such modules were considered, in particular, in  \cite{Vasil'ev_2001}, \cite{Kozko_Rozhdestvenskii_2004},
\cite{Vakarchuk_2016}, \cite{Babenko_Konareva_2019}, etc.



Further, let  ${\mathcal M}(\tau )$, $\tau>0$, be the set of all functions $\mu  $, bounded
non-decreasing and non-constant on the segment   $[0, \tau]$. By $\Omega _\varphi(f, \tau, \mu  , u)_{_{\scriptstyle  {\bf M}}}^* $, $u>0$,   denote the average value of the generalized modulus of smoothness
$\omega _\varphi(f, t)_{_{\scriptstyle  {\bf M}}}^*$ of the function  $f$ with the weight $\mu   \in {\mathcal M}(\tau )$, that is,
 \begin{equation}\label{Mean_Value_Gen_Modulus}
  \Omega _\varphi(f, \tau, \mu  , u)_{_{\scriptstyle  {\bf M}}}^*  := \frac
   {1}{\mu    (\tau ) - \mu    (0)}\int 
   _0^u\omega _\varphi(f, t)_{_{\scriptstyle  {\bf M}}}^*  {\mathrm d}\mu    \Big(\frac {\tau
   t}{u}\Big).
 \end{equation}

Note that for any $f\in {\mathcal S}_{\bf M},$ $\tau >0,$ $\mu
\in {\mathcal M}(\tau )$ and  $u>0$ the functionals $\Omega _\varphi(f, \tau, \mu  , u)_{_{\scriptstyle  {\bf M}}}^* $
do not exceed the value $\omega _\varphi(f,u)_{_{\scriptstyle  {\bf M}}}^*,$ and therefore in a number of questions
they can be more effective for characterizing the structural and approximative properties of the function $f$.



\subsection{Definition of $\psi$-integrals, $\psi$-derivatives   and functional classes}

Let  $\psi=\{\psi(k)\}_{k\in {\mathbb Z}}$ be an arbitrary sequence of complex numbers.
If for a given function $f\in L$ with the Fourier series
 $\sum
 _{k\in {\mathbb Z}}\widehat {f}(k){\mathrm e}^{{\mathrm i}k
 x}
 $
 the series
 $
 \sum
 _{k\in {\mathbb Z}}\psi(k)\widehat {f}(k){\mathrm e}^{{\mathrm i}k
 x}
 $
 is the Fourier series of  a certain function  $F\in L$, then   $F$ is called  (see, for example, \cite[Ch.~11]{Stepanets_M2005})
 $\psi$-integral of the function  $f$ and is denoted as $F={\cal J}^{\psi }(f, \cdot)$. In turn,
 the function $f$ is called the $\psi$-derivative of the function $F$ and is denoted as  $f=F^{\psi}$.
 The Fourier coefficients of functions $f$ and $f^{\psi }$ are related by the equalities
\begin{equation} \label{Fourier_Coeff_der}
 \widehat f(k)=\psi (k)\widehat f^{\psi }(k), \quad k \in {\mathbb Z}.
  \end{equation}
The set of  $\psi$-integrals of
functions  $f$ of $L$ is denoted as  $L^{\psi }$. If
 ${\mathfrak {N}}\subset L$, then  $L^{\psi }{\mathfrak {N}}$ denotes the set of $\psi$-integrals of
 functions $f\in {\mathfrak {N}}$. In particular,    $L^{\psi }{\mathcal S}_{\bf M}$ is the set
 of $\psi$-integrals of    functions $f\in {\mathcal S}_{\bf M}$.


For arbitrary fixed $\varphi\in \Phi$, $\tau>0$ and $\mu  \in  M(\tau)$, define the following functional classes:
  \begin{equation} \label{L^psi(varphi,n)}
  L^{\psi }(\varphi,\tau ,\mu   , n)_{_{\scriptstyle  {\bf M}}}^*:=
  \Big\{f\in L^{\psi }{\mathcal S}_{\bf M}:\quad
  \Omega_\varphi\Big(f^{\psi }, \tau, \mu   ,\frac{\tau }n\Big)_{_{\scriptstyle  {\bf M}}}^* \le 1, \quad n \in
  {\mathbb{N}}\Big\},
  \end{equation}
   \begin{equation} \label{L^psi(varphi,Phi)}
   L^{\psi }(\varphi, \tau, \mu   , \Omega )_{_{\scriptstyle  {\bf M}}}^*  :=
    \Big\{f\in L^{\psi }{\mathcal S}_{\bf M}:\
    \Omega _\varphi(f^{\psi }, \tau , \mu   ,  u)_{_{\scriptstyle  {\bf M}}}^* \le \Omega  (u),\ 0\le u\le \tau \Big\},
  \end{equation}
where $\Omega  (u)$ is a fixed continuous monotonically increasing function of the variable   $u\ge 0$ such that $\Omega  (0)=0$.

Note that  in the Lebesgue spaces $L_2$,  for  $\psi (k)=k^{-r}$, $r\in \mathbb{N}$,
$\varphi(t)=2^\frac \alpha 2 (1-\cos{kt})^\frac \alpha2$, and the weight function
$\mu  (t)=t$, Taikov     \cite{Taikov_1976, Taikov_1979} (see also \cite[Ch. 4]{Pinkus_1985}) first considered the
functional classes  similar to (\ref{L^psi(varphi,n)}) and (\ref{L^psi(varphi,Phi)}).  He found
the exact values of the widths of such classes  in the spaces  $L_2$   in the case when the majorants $\Omega$
of the averaged values of  the moduli of smoothness satisfied some constraints. The problem of finding the exact values
of the widths
in different  spaces of functional classes of this kind 
was also studied in   \cite{Yussef_1988, Shalaev_1991, Esmaganbetov_1999, Serdyuk_2003, Vakarchuk_2004, Vakarchuk_2016, Abdullayev_Serdyuk_Shidlich_2021}, etc.


\subsection{Best approximations and widths of functional classes}


Let   ${\mathscr T}_{2n+1}$, $n=0,1,\ldots$, be the set of all trigonometric polynomials
${T}_{n}(x) = \sum_{|k|\le n}  c_{k}\mathrm{e}^{\mathrm{i}kx}$ of the order  $n$, where $c_{ k}$
are arbitrary complex numbers.

For any function $f\in {\mathcal S}_{\bf M}$ denote by $E_n (f)_{_{\scriptstyle  {\bf M}}}^*$ its best approximation by the trigonometric polynomials   ${T}_{n-1}\in {\mathscr T}_{2n-1}$ in the space  ${\mathcal S}_{\bf M}$ with respect to the norm $\|\cdot\|_{_{\scriptstyle  {\bf M}}}^*$, i.e.,
 \begin{equation}\label{Best_Approx_Deff}
    E_n (f)_{_{\scriptstyle  {\bf M}}}^* :=
    \inf\limits_{{T}_{n-1}\in {\mathscr T}_{2n-1} }\|f-{T}_{n-1}\|_{_{\scriptstyle  {\bf M}}}^*
    .
 \end{equation}
From  relation (\ref{def-Orlicz-norm}),  it follows \cite[Lemma 2]{Abdullayev_Chaichenko_Shidlich_2021} that for any   $f \in {\mathcal S}_{\bf M}$ and all $n=0,1,\ldots$,
           \begin{equation} \label{Best_app_all}
           E_n (f)_{_{\scriptstyle  {\bf M}}}^* =\|f-{S}_{n-1}({f})\|_{_{\scriptstyle  {\bf M}}}^*
             = \sup \Big\{
           \sum _{|k |\ge  n}  \lambda_k|\widehat{f}(k) |: \quad  \lambda\in \Lambda \Big\},
           \end{equation}
where $S_{n-1}(f)=S_{n-1}(f,\cdot)= \sum _{|k|\le n-1}\widehat{f}(k) {\mathrm{e}^{\mathrm{i}k\cdot}}$ is the partial Fourier sum
of the order  $n-1$ of the function $f$.


Further, let  $K$ be a convex centrally symmetric subset of ${\mathcal S}_{\bf M}$ and
 let $B_{_{\scriptstyle  {\bf M}}}^*$ be a unit ball of the space  ${\mathcal S}_{\bf M}$ with respect to the norm $\|\cdot\|_{_{\scriptstyle  {\bf M}}}^*$. Let also  $F_N$ be an arbitrary $N$-dimensional subspace of the space ${\mathcal S}_{\bf M}$, $N\in {\mathbb N}$, and
$\mathscr{L}({\mathcal S}_{\bf M}, F_N)$ be a set of linear operators from ${\mathcal S}_{\bf M}$ to $F_N$.
 By    $\mathscr {P}({\mathcal S}_{\bf M}, F_N)$ denote the subset of projection operators of the set ${\mathscr{L}}({\mathcal S}_{\bf M}, F_N)$,  that is, the set of the operators  $A$ of linear projection onto the set
  $F_N$ such that $Af = f$ when $f\in F_N$.  The quantities
 \[
 b_N(K, {\mathcal S}_{\bf M})=\sup\limits _{F_{N+1}}\sup\{\varepsilon>0: \varepsilon B_{_{\scriptstyle  {\bf M}}}^*\cap F_{N+1}
 \subset K\},
 \]
 \[
 d_N(K, {\mathcal S}_{\bf M})=\inf\limits _{F_N}\sup \limits _{f\in K}
 \inf \limits _{u\in F_N}\|f - u \|_{_{\scriptstyle  {\bf M}}}^* ,
 \]
 \[
 \lambda _N(K,{\mathcal S}_{\bf M})=
 \inf \limits _{F_N}\inf \limits_{A\in {\mathscr {L}}({\mathcal S}_{\bf M}, F_N)}\sup \limits _{f\in K}
 \|f - Af\|_{_{\scriptstyle  {\bf M}}}^* ,
 \]
 \[
 \pi _N(K, {\mathcal S}_{\bf M})=\inf \limits_{F_N}\inf \limits _{A\in {\mathscr {P}}({\mathcal S}_{\bf M},F_N)}
 \sup \limits _{f\in K}\|f - Af\|_{_{\scriptstyle  {\bf M}}}^* ,
 \]
 are called Bernstein, Kolmogorov, linear, and projection $N$-widths of the set $K$ in the space ${\mathcal S}_{\bf M}$, respectively.


\section{Main results}

\subsection{Jackson-type inequalities}

In this subsection,  Jackson-type inequalities are obtained  in terms of the   best approximations and the  averaged values of generalized moduli of smoothness in the spaces ${\mathcal S}_{\bf M}$. To state these results,  denote by $\Psi$ the set of arbitrary
sequences $\psi=\{\psi (k)\}_{k\in {\mathbb Z}}$  of complex numbers such that  $|\psi (k)|=|\psi (-k)| \ge |\psi (k+1)|$ for $k\in {\mathbb N}$. Here and below, we also assume that the sequence ${\bf {M^*}}=\{{M^*_k}(v)\}_{k\in {\mathbb Z}}$  satisfies the condition \begin{equation}\label{M_condition}
M_k^*(v)>M_k^*(1)=1, \quad v>1,\ k\in {\mathbb Z}.
 \end{equation}

 \begin{theorem}
       \label{Th.3.1}
       Assume that $f\in L^{\psi }{\mathcal S}_{\bf M}$, condition $(\ref{M_condition})$ holds,    $\varphi\in \Phi$, $\tau>0$,  $\mu  \in  {\mathcal M}(\tau )$
       and $\psi\in \Psi$.         Then for any   $n\in {{\mathbb N}}$ the following inequality is true:
 \begin{equation}\label{Jackson_Type_Ineq}
    E_n(f)_{_{\scriptstyle  {\bf M}}}^* \le
     \frac {\mu   (\tau ) - \mu   (0)}
    {I_{n,\varphi}(\tau ,\mu   )}  |\psi  (n)|\,
    \Omega _\varphi\Big(f^{\psi}, \tau,\mu   , \frac{\tau }n\Big)_{_{\scriptstyle  {\bf M}}}^*,
 \end{equation}
where
 \begin{equation}\label{I_n,varphi,p}
      I_{n,\varphi}(\tau ,\mu   ):=
      \mathop{\inf\limits _{k\ge n}}\limits_{k \in {\mathbb N}} \int\limits _0^{\tau }\varphi \Big(\frac {k t}n\Big)
      {\mathrm d} \mu     (t).
 \end{equation}
If, in addition, the function $\varphi$ is non-decreasing on the interval
$[0,\tau]$  and the condition
 \begin{equation}\label{I_n,varphi,p_Equiv_n}
      I_{n,\varphi }(\tau ,\mu   )=\int\limits _0^{\tau }\varphi (t) {\mathrm d} \mu     (t),
 \end{equation}
holds, then  inequality $(\ref{Jackson_Type_Ineq})$ can not be improved and therefore,
 \begin{equation}
       \label{Jackson_Type_Exact}
       \mathop {\sup\limits _{f\in L^{\psi}{\mathcal S}_{\bf M}}}\limits _{f\not ={\rm const }}
       \frac {E_n(f)_{_{\scriptstyle  {\bf M}}}^* }
       {\Omega_\varphi (f^{\psi }, \tau, \mu  , \frac{\tau }{n} )_{_{\scriptstyle  {\bf M}}}^* }=
        \frac {\mu  (\tau ) - \mu   (0)}{\int_0^{\tau }\varphi (t) {\mathrm d} \mu     (t)} |\psi  (n)|.
 \end{equation}

\end{theorem}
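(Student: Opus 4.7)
My plan is to split the estimate into two classical pieces — a reduction from $f$ to its $\psi$-derivative, and a Jackson-type bound for $f^\psi$ via its averaged modulus — and then exhibit an explicit extremal function to confirm sharpness under hypothesis (\ref{I_n,varphi,p_Equiv_n}).

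First, using (\ref{Fourier_Coeff_der}), (\ref{Best_app_all}) and the monotonicity assumption $|\psi(k)|\le |\psi(n)|$ for $|k|\ge n$ coming from $\psi\in\Psi$, I would write
\[
E_n(f)_{_{\scriptstyle {\bf M}}}^* = \sup_{\lambda\in\Lambda}\sum_{|k|\ge n}\lambda_k |\psi(k)|\,|\widehat{f^{\psi}}(k)|
\le |\psi(n)|\,\sup_{\lambda\in\Lambda}\sum_{|k|\ge n}\lambda_k |\widehat{f^{\psi}}(k)|
= |\psi(n)|\,E_n(f^{\psi})_{_{\scriptstyle {\bf M}}}^*,
\]
where sequences $\lambda$ supported in $\{|k|\ge n\}$ are allowed in the last supremum. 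This reduces the problem to bounding $E_n(f^{\psi})_{_{\scriptstyle {\bf M}}}^*$ by $\Omega_\varphi(f^\psi,\tau,\mu,\tau/n)_{_{\scriptstyle {\bf M}}}^*$.

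For the second step I would fix any $\lambda\in\Lambda$ supported in $\{|k|\ge n\}$, take $h=t$ inside the $\sup_{|h|\le t}$ defining $\omega_\varphi$, and use (\ref{modulus_generalize difference_Fourier_Coeff}) together with the definition (\ref{def-Orlicz-norm}) of the Orlicz norm to get, for each $t\in[0,\tau/n]$,
\[
\omega_\varphi(f^{\psi},t)_{_{\scriptstyle {\bf M}}}^*\;\ge\;\|\Delta_{t}^{\varphi}f^{\psi}\|_{_{\scriptstyle {\bf M}}}^*\;\ge\;\sum_{|k|\ge n}\lambda_k\,\varphi(kt)\,|\widehat{f^{\psi}}(k)|.
\]
Integrating against $d\mu(nt)$ on $[0,\tau/n]$, interchanging sum and integral (all terms nonnegative), changing variable $s=nt$, and applying (\ref{I_n,varphi,p}) to each inner integral $\int_0^\tau \varphi(ks/n)\,d\mu(s)\ge I_{n,\varphi}(\tau,\mu)$ for $|k|\ge n$, I obtain
\[
(\mu(\tau)-\mu(0))\,\Omega_\varphi(f^{\psi},\tau,\mu,\tau/n)_{_{\scriptstyle {\bf M}}}^*\;\ge\; I_{n,\varphi}(\tau,\mu)\sum_{|k|\ge n}\lambda_k |\widehat{f^{\psi}}(k)|.
\]
Taking $\sup$ over admissible $\lambda$ and invoking (\ref{Best_app_all}) gives the desired Jackson-type bound on $E_n(f^{\psi})_{_{\scriptstyle {\bf M}}}^*$, which together with the first step yields (\ref{Jackson_Type_Ineq}).

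For sharpness under (\ref{I_n,varphi,p_Equiv_n}) (and $\varphi$ non-decreasing on $[0,\tau]$), I would plug in the test function $f_0(x)=\psi(n)\,\mathrm{e}^{\mathrm{i}nx}$, so that $f_0^{\psi}(x)=\mathrm{e}^{\mathrm{i}nx}$. Since condition (\ref{M_condition}) forces $\lambda_n\le 1$ whenever $M^*_n(\lambda_n)\le 1$, the Orlicz norm of a single-harmonic function equals the modulus of its coefficient, giving $E_n(f_0)_{_{\scriptstyle {\bf M}}}^*=\|f_0\|_{_{\scriptstyle {\bf M}}}^*=|\psi(n)|$ and $\|\Delta_h^{\varphi}f_0^{\psi}\|_{_{\scriptstyle {\bf M}}}^*=\varphi(nh)$. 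Monotonicity of $\varphi$ on $[0,\tau]$ and its evenness yield $\omega_\varphi(f_0^{\psi},t)_{_{\scriptstyle {\bf M}}}^*=\varphi(nt)$ for $0\le t\le \tau/n$, and the substitution $s=nt$ then produces
\[
\Omega_\varphi(f_0^{\psi},\tau,\mu,\tau/n)_{_{\scriptstyle {\bf M}}}^*=\frac{1}{\mu(\tau)-\mu(0)}\int_0^{\tau}\varphi(s)\,d\mu(s),
\]
from which the ratio in (\ref{Jackson_Type_Exact}) exactly matches the right-hand side. The main technical obstacle I anticipate is justifying the sum/integral interchange and the use of the defining supremum of $\|\cdot\|_{_{\scriptstyle {\bf M}}}^*$ inside an integral uniformly in $t$; this is where the positivity of $\varphi$, $\lambda_k$, and $\mu$ together with the attainability of the supremum in (\ref{Best_app_all}) will be essential.
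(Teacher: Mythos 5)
Your proposal is correct and follows essentially the same route as the paper: the same reduction $E_n(f)_{_{\scriptstyle {\bf M}}}^*\le|\psi(n)|E_n(f^{\psi})_{_{\scriptstyle {\bf M}}}^*$ via (\ref{Fourier_Coeff_der}) and (\ref{Best_app_all}), the same averaged-modulus bound for $E_n(f^{\psi})_{_{\scriptstyle {\bf M}}}^*$ (which the paper imports as inequality (\ref{Jackson_type_OLD}) from \cite{Abdullayev_Chaichenko_Shidlich_2021} while you re-derive it directly), and sharpness via a single-harmonic extremal function, which is the paper's $f_n$ with $\gamma=0$, $\varepsilon_{-n}=0$, $\varepsilon_n=1$, $\delta=\psi(n)$. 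All steps check out, including the use of condition (\ref{M_condition}) to get $\sup\{\lambda_n:\lambda\in\Lambda\}=1$.
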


\begin{proof} Let $f\in L^{\psi }{\mathcal S}_{\bf M}$. By virtue of  (\ref{Fourier_Coeff_der}) and (\ref{Best_app_all}), we have
 \[
 E_n(f)_{_{\scriptstyle  {\bf M}}}^* =\sup \Big\{
           \sum _{|k |\ge  n}  \lambda_k|\widehat{f}(k) |: \  \lambda\in \Lambda \Big\} \le
           \sup \Big\{
           \sum _{|k |\ge  n}  \lambda_k \Big|\frac {\psi (n)
 }{\psi (k)}\Big||\widehat{f}(k) |: \   \lambda\in \Lambda \Big\}
 \]
 \begin{equation}
       \label{A6.91}
       =|\psi  (n)|  \sup \Big\{
           \sum _{|k |\ge  n}  \lambda_k \Big|\frac {\widehat f(k)}{\psi (k)}\Big|: \quad  \lambda\in \Lambda \Big\} =
           |\psi  (n)|  E_n (f^{\psi })_{_{\scriptstyle  {\bf M}}}^* .
   \end{equation}
As shown in  \cite[Proof of Theorem 1]{Abdullayev_Chaichenko_Shidlich_2021}, for any   $g \in {\mathcal S}_{\bf M}$,   $\tau >0$, $\varphi\in \Phi$,  $\mu   \in {\mathcal M}(\tau )$ and   $n\in {\mathbb N}$
 \begin{equation}\label{Jackson_type_OLD}
       E_n (g)_{_{\scriptstyle  {\bf M}}}^* \le
       \frac 1{I_{n,\varphi}(\tau ,\mu   )}
       \int \limits _0^{\tau }
       \omega_\varphi \Big(g, \frac t{n}\Big)_{_{\scriptstyle  {\bf M}}}^* {\mathrm d} \mu    (t).
   \end{equation}
Let us set  $g =f^{\psi }$ in (\ref{Jackson_type_OLD}). Then
   \begin{equation}\label{A6.93}
        E_n (f^{\psi})_{_{\scriptstyle  {\bf M}}}^* \le
        \frac {\mu   (\tau ) - \mu   (0)}{I_{n,\varphi}(\tau ,\mu   )}
        \frac {\int  _0^{\tau }\omega _\varphi (f^{\psi }, \frac t{n})_{_{\scriptstyle  {\bf M}}}^* {\mathrm d} \mu    (t)}{\mu   (\tau ) - \mu   (0)} \le 
        \frac {\mu   (\tau ) - \mu   (0)}{I_{n,\varphi}(\tau ,\mu   )}
        \Omega _\varphi \Big(f^{\psi }, \tau, \mu   , \frac{\tau }n\Big)_{_{\scriptstyle  {\bf M}}}^* .
   \end{equation}
Combining  inequalities (\ref{A6.91}) and (\ref{A6.93}), we obtain   (\ref{Jackson_Type_Ineq}).

Now assume that the function $\varphi$ is non-decreasing on  the interval
$[0,\tau]$ and  
condition (\ref{I_n,varphi,p_Equiv_n}) holds.
Then by virtue of (\ref{Jackson_Type_Ineq}), we have
 \begin{equation}\label{A6.94}
      \mathop {\sup\limits _{f\in L^{\psi}{\mathcal S}_{\bf M}}}\limits _{f\not ={\rm const }}
       \frac {E_n(f)_{_{\scriptstyle  {\bf M}}}^* }
       {\Omega_\varphi (f^{\psi }, \tau, \mu  , \frac{\tau }n )_{_{\scriptstyle  {\bf M}}}^* }\le
             \frac {\mu  (\tau ) - \mu   (0)}{\int_0^{\tau }\varphi(u) {\mathrm d} \mu     (u)}|\psi  (n)|
       .
   \end{equation}
 To prove the unimprovability of  inequality (\ref{A6.94}),   consider  the function
 \[
  f_n(x)=\gamma  +\varepsilon_{-n}\delta   {\mathrm e}^{-{\mathrm i}nx} +
  \varepsilon_{n} \delta    {\mathrm e}^{{\mathrm i}nx},
  \]
where  $\gamma$ and  $\delta $ are arbitrary complex numbers, and   $\varepsilon_k$, $k\in \{-n,n\}$,
are integers such that  $|\varepsilon_{n}|+|\varepsilon_{-n}|=1$
. Taking into account relations (\ref{modulus_generalize difference_Fourier_Coeff}), (\ref{Fourier_Coeff_der}) and (\ref{M_condition}), we have
 \begin{equation}\label{A6.95}
 \|\Delta_h^\varphi f_n^\psi\|_{_{\scriptstyle  {\bf M}}}^*=|\delta |\frac {\varphi (n h)}{|\psi(n)| }\sup \Big\{
           |\varepsilon_{-n}|\lambda_{-n}+
           |\varepsilon_{n}|\lambda_{n}: \quad  \lambda\in \Lambda \Big\}
 =  |\delta |\frac {\varphi (nh)}{|\psi(n)| }.
 \end{equation}
Since the  function $\varphi(nh)$ is non-decreasing on the interval  $[0, \frac {\tau}n]$,
then for $0\le t\le \tau$,
 \begin{equation}\label{A6.96}
\omega _\varphi (f_n^\psi, t) =    |\delta  |
 \frac {\varphi
 (nt)}{|\psi  (n)|}.
 \end{equation}
Taking into account (\ref{Mean_Value_Gen_Modulus}),  (\ref{A6.96}) and the equality
 $
    E_n
  (f_n)_{_{\scriptstyle  {\bf M}}}^*  =|\delta  |
  ,
 $
we see that
  \[
  \mathop {\sup\limits _{f\in L^{\psi }{\mathcal S}_{\bf M}}}\limits _{f\not ={\rm const}}
  \frac {E_n  (f)_{_{\scriptstyle  {\bf M}}}^* }
  {\Omega _\varphi  (f^{\psi }, \tau , \mu   , \frac {\tau }n)_{_{\scriptstyle  {\bf M}}}^* }
  \ge
  \frac {E_n  (f_n)_{_{\scriptstyle  {\bf M}}}^* }
  {\Omega _\varphi  (f_n^{\psi }, \tau, \mu  , \frac {\tau }n)_{_{\scriptstyle  {\bf M}}}^* }
  \]
  \begin{equation}\label{A6.97}
  = \frac {|\delta  | (\mu   (\tau )-\mu   (0))  |\psi  (n)|}
   {\int _0^{\tau /n}|\delta  | \varphi (nt){\mathrm d} \mu    (nt) }=
         \frac {\mu  (\tau ) - \mu   (0)}{\int_0^{\tau }\varphi(u) {\mathrm d} \mu     (u)}  |\psi  (n)|.
 \end{equation}
Relations (\ref{A6.94}) and (\ref{A6.97}) yield (\ref{Jackson_Type_Exact}).  

\end{proof}


\subsection{Widths of the classes $L^{\psi }(\varphi, \mu, \tau, n)_{_{\scriptstyle  {\bf M}}}^*  $}

In this subsection,  the values of Kolmogorov, Bernstein, linear, and projection widths are found for the
 classes $L^{\psi}(\varphi, \mu  , \tau, n)_{_{\scriptstyle  {\bf M}}}^*  $ in the case when the sequences
$\psi (k)$ satisfy some natural restrictions.

\begin{theorem} \label{Th.3.2}
       Assume that    $\psi \in \Psi$, $\tau>0$,  condition $(\ref{M_condition})$ holds, the function
       $\varphi\in \Phi$ is non-decreasing on the interval $[0,\tau]$ and  $\mu   \in {\mathcal M}(\tau )$.
       Then for any  $n\in {\mathbb N}$ and  $N\in \{2n-1,  2n\}$  the following inequalities are true:
\begin{equation}\label{A6.105}
    \frac{\mu   (\tau ) - \mu   (0)}
   {\int_0^{\tau }\varphi (t) {\mathrm d} \mu     (t)} |\psi (n)|
   \le P_{N} \Big(L^{\psi }(\varphi, \tau, \mu  , n)_{_{\scriptstyle  {\bf M}}}^*  , {\mathcal S}_{_{\scriptstyle  {\bf M}}} \Big)
    \le  \frac {\mu   (\tau ) - \mu  (0)}
   {I_{n,\varphi}(\tau ,\mu   )}    |\psi (n)|,
  \end{equation}
where the quantity $I_{n,\varphi}(\tau ,\mu   )$ is defined by $(\ref{I_n,varphi,p})$, and $P_N$
is any of the widths $b_N$, $d_N$, $\lambda _N$ or $ \pi _N$. If, in addition, condition $(\ref{I_n,varphi,p_Equiv_n})$ holds, then
    \begin{equation}\label{A6.106} 
          P_{N} \Big(L^{\psi }(\varphi, \tau, \mu  , n)_{_{\scriptstyle  {\bf M}}}^*  , {\mathcal S}_{_{\scriptstyle  {\bf M}}}\Big )=
           \frac{\mu   (\tau ) - \mu   (0)}
          {\int_0^{\tau }\varphi(t) {\mathrm d} \mu     (t)} |\psi (n)|.
      \end{equation}
\end{theorem}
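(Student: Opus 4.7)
I would establish the two-sided estimate (\ref{A6.105}) by proving the upper inequality for the largest of the four widths, $\pi_N$, and the lower inequality for the smallest, $b_N$, and then invoking the standard chain $b_N\le d_N\le\lambda_N\le\pi_N$. The equality (\ref{A6.106}) is then automatic, since under condition (\ref{I_n,varphi,p_Equiv_n}) the two bounds coincide.

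\textbf{Upper bound.} For $N=2n-1$ I take the subspace $F_N=\mathscr T_{2n-1}$ with $A=S_{n-1}$ as projection; by (\ref{Best_app_all}),
\[
\pi_{2n-1}(K,\mathcal S_{\bf M})\le\sup_{f\in K}\|f-S_{n-1}(f)\|_{\bf M}^*=\sup_{f\in K}E_n(f)_{\bf M}^*.
\]
Since $\Omega_\varphi(f^\psi,\tau,\mu,\tau/n)_{\bf M}^*\le 1$ for every $f\in K$, Theorem \ref{Th.3.1} gives exactly the right-hand side of (\ref{A6.105}). Enlarging $F_{2n-1}$ by any one additional Fourier mode $\mathrm e^{\mathrm ik\cdot}$ and extending the projection simply deletes another nonnegative term from the representation (\ref{Best_app_all}), so $\pi_{2n}\le\pi_{2n-1}$; together with the chain this handles all four widths for $N\in\{2n-1,2n\}$.

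\textbf{Lower bound.} For $b_{2n}$ I would take $F_{2n+1}=\mathscr T_{2n+1}=\mathrm{span}\{\mathrm e^{\mathrm ik\cdot}:|k|\le n\}$ and show that the ball of radius $\varepsilon:=(\mu(\tau)-\mu(0))|\psi(n)|/\int_0^\tau\varphi(t)\,{\rm d}\mu(t)$ in $(\mathscr T_{2n+1},\|\cdot\|_{\bf M}^*)$ lies inside $K$. Fix $T=\sum_{|k|\le n}c_k\,\mathrm e^{\mathrm ik\cdot}$ with $\|T\|_{\bf M}^*\le\varepsilon$; the $\psi$-derivative $T^\psi$ has Fourier coefficients $c_k/\psi(k)$ for $1\le|k|\le n$ (well-defined because $|\psi(k)|\ge|\psi(n)|>0$ by $\psi\in\Psi$), while the $k=0$ mode is annihilated by $\varphi(0)=0$. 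For $|h|\le u/n$ and $0\le u\le\tau$, the monotonicity of $\varphi$ on $[0,\tau]$ gives $\varphi(kh)\le\varphi(nh)\le\varphi(u)$ and $\psi\in\Psi$ gives $|\psi(k)|\ge|\psi(n)|$ for $1\le|k|\le n$, so that
\[
\|\Delta_h^\varphi T^\psi\|_{\bf M}^*=\sup_{\lambda\in\Lambda}\sum_{1\le|k|\le n}\lambda_k\frac{\varphi(kh)}{|\psi(k)|}|c_k|\le\frac{\varphi(u)}{|\psi(n)|}\|T\|_{\bf M}^*.
\]
Taking $\sup$ over $|h|\le u/n$, changing variable $s=nt$ in (\ref{Mean_Value_Gen_Modulus}), and integrating against ${\rm d}\mu$ on $[0,\tau]$ yield $\Omega_\varphi(T^\psi,\tau,\mu,\tau/n)_{\bf M}^*\le 1$, hence $T\in K$. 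Restricting to any $2n$-dimensional subspace of $\mathscr T_{2n+1}$ gives $b_{2n-1}(K,\mathcal S_{\bf M})\ge\varepsilon$ as well, and the chain propagates the lower bound to $d_N$, $\lambda_N$ and $\pi_N$.

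\textbf{Main obstacle.} The delicate step is the termwise comparison $\varphi(kh)/|\psi(k)|\le\varphi(u)/|\psi(n)|$ inside the $\Lambda$-supremum defining the Orlicz norm: it is exactly what lets the common factor $\varphi(u)/|\psi(n)|$ be pulled outside the sup and leave $\|T\|_{\bf M}^*$ as the residual quantity. Both monotonicities---of $\varphi$ on $[0,\tau]$ and of $|\psi(\cdot)|$ on $\mathbb N$---are indispensable, and it is they that reduce the Bernstein inscribing problem to a single scalar inequality.
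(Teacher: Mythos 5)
Your proposal is correct and follows essentially the same route as the paper: the upper bound via $\pi_{2n-1}\le\sup_{f\in K}E_n(f)_{_{\scriptstyle \bf M}}^*$ combined with Theorem \ref{Th.3.1}, the lower bound via the Bernstein width by inscribing the ball of radius $R_n$ of $\mathscr T_{2n+1}$ into the class using the termwise estimate $\varphi(kv)/|\psi(k)|\le\varphi(t)/|\psi(n)|$ inside the $\Lambda$-supremum, and the chain $b_N\le d_N\le\lambda_N\le\pi_N$ together with monotonicity in $N$ to cover both $N=2n-1$ and $N=2n$. The key computation pulling $\varphi(u)/|\psi(n)|$ out of the Orlicz-norm supremum is exactly the paper's argument.
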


\begin{proof}  The proof of Theorems \ref{Th.3.2} and  \ref{Th.3.3}  basically repeats the proof  of corresponding
theorems in the spaces   ${\mathcal S}^p$ (see \cite{Serdyuk_2003, Abdullayev_Serdyuk_Shidlich_2021}) and   is adapted  in accordance with the properties of the spaces ${\mathcal S}_{\bf M}$. Based on Theorem \ref{Th.3.1}, taking into account the definition of the set $\Psi$, for an arbitrary function
$f\in L^{\psi }(\varphi, \tau , \mu  , n)_{_{\scriptstyle  {\bf M}}}^*  $, we have
     \begin{equation}\label{A6.107}
            E_n(f)_{_{\scriptstyle  {\bf M}}}^*  \le
                 \frac {\mu   (\tau ) - \mu  (0)}{I_{n,\varphi}(\tau ,\mu   )}
                 \Omega _\varphi\Big(f^{\psi}, \tau , \mu  , \frac {\tau}n\Big)_{_{\scriptstyle  {\bf M}}}^*  |\psi (n)|
      \le   \frac {\mu   (\tau ) - \mu  (0)}{I_{n,\varphi}(\tau ,\mu   )}
                |\psi (n)|.
      \end{equation}
Then, taking into account the definition of the projection width $\pi _{N}$, and  relations
(\ref{Best_app_all}) and  (\ref{A6.107}), we conclude that
     \begin{equation}\label{A6.109} 
           \pi _{2n-1}\Big(L^{\psi }(\varphi, \tau, \mu  , n)_{_{\scriptstyle  {\bf M}}}^*  , {\mathcal S}_{_{\scriptstyle  {\bf M}}}\Big)
           \le   \sup\limits _{f\in L^{\psi }(\varphi, \tau,\, \mu  ,\, n)_{_{\scriptstyle  {\bf M}}}^*  }
          E_n(f)_{_{\scriptstyle  {\bf M}}}^*
 \le
           \frac {\mu   (\tau ) - \mu  (0)}{I_{n,\varphi}(\tau ,\mu   )} |\psi (n)|.
      \end{equation}
Since the widths  $b_N$, $d_N$, $\lambda _N$ and  $\pi _N$ do not increase with increasing $N$  and
     \begin{equation}\label{A6.110}
         b_N(K, X)\le   d_N(K, X) \le \lambda_N(K, X)\le \pi _N(K, X)
      \end{equation}
(see, for example, \cite[Ch.~4]{Tikhomirov_M1976}), then by virtue of (\ref{A6.109}), we get  the upper estimate in
(\ref{A6.105}).

To obtain the necessary lower estimate, it suffices to show that
     \begin{equation}\label{A6.111} 
              b_{2n}\Big(L^{\psi }(\varphi, \mu  , \tau, n)_{_{\scriptstyle  {\bf M}}}^*  , {\mathcal S}_{_{\scriptstyle  {\bf M}}}\Big)\ge
               \frac{\mu   (\tau ) - \mu   (0)}   {\int_0^{\tau }\varphi(u) {\mathrm d} \mu     (u)} |\psi (n)|=:R_n.
      \end{equation}
In the $(2n+1)$-dimensional space ${\mathscr T}_{2n+1}$ of trigonometric polynomials of order $n$, consider
the ball $B_{2n+1}$, whose radius is equal to the number $R_n$ defined in  (\ref{A6.111}), that is,
  $$
  B_{2n+1}=\Big\{t_n\in {\mathscr T}_{2n+1} 
  : \|t_n\|_{_{\scriptstyle  {\bf M}}}^*  \le R_n\Big\}.
  $$
and prove the embedding  $B_{2n+1}\subset L^{\psi }(\varphi, \tau, \mu  , n)_{_{\scriptstyle  {\bf M}}}^*  $.


For an arbitrary polynomial $T_n\in B_{2n+1}$, due to (\ref{general_modulus}) and the parity of the function
$\varphi$, we have
 $$
 \omega_\varphi(T_n^{\psi }, t)_{_{\scriptstyle  {\bf M}}}^*
 =\sup\limits_{0\le v\le t}\sup\Big\{
 \sum _{|k|\le n} \lambda_k\varphi(kv) |\widehat T_n^{\psi }(k)|:\lambda\in \Lambda\Big\} .
 $$
Then, taking into account   relation (\ref{Fourier_Coeff_der}) and the nondecreasing of the function $\varphi$  on $[0,a]$,
for $\tau \in (0,a]$  we get
  \[
  (\mu   (\tau ) -\mu   (0))  \Omega _\varphi \Big (T_n^{\psi }, \tau,\mu  , \frac {\tau }n\Big )_{_{\scriptstyle  {\bf M}}}^*
   =\int\limits _0^{\tau }\omega _\varphi  \Big(T_n^{\psi }, \frac t{n}\Big)_{_{\scriptstyle  {\bf M}}}^*  {\mathrm d} \mu    (t)
   \]
     \[
    = \int\limits _0^{\tau } \sup\limits_{0\le v\le \frac t{n}}\sup\Big\{
 \sum _{|k|\le n} \lambda_k\varphi(kv) |\widehat T_n^{\psi }(k)|:\lambda\in \Lambda\Big\}  {\mathrm d} \mu    (t)
 \]
 \[
 = \int\limits _0^{\tau } \sup\limits_{0\le v\le \frac t{n}}\sup\Big\{
 \sum _{|k|\le n} \lambda_k\varphi(kv) \Big| \frac{\widehat T_n(k)}{\psi(k)}\Big|:\lambda\in \Lambda\Big\}  {\mathrm d} \mu    (t)
 \]
 \[
 \le \frac 1{|\psi(n)|}\int\limits _0^{\tau } \varphi(t) \sup\Big\{
 \sum _{|k|\le n} \lambda_k  |  \widehat T_n(k) |:\lambda\in \Lambda\Big\}  {\mathrm d} \mu    (t)
 = \frac {\|T_n\|_{_{\scriptstyle  {\bf M}}}^*    }
    {|\psi(n)| }
    \int\limits _0^{\tau } \varphi(t) {\mathrm d} \mu    (t).
   \]
Therefore, given the inclusion $T_n\in B_{2n+1}$ it follows that
$\Omega _\varphi   (T_n^{\psi }, \tau,\mu  , \frac {\tau }n  )_{_{\scriptstyle  {\bf M}}}^* \!\le \!1$. Thus,
$T_n\in L^{\psi }(\varphi, \tau, \mu  , n)_{_{\scriptstyle  {\bf M}}}^* $, and the embedding
$B_{2n+1}\subset L^{\psi }(\varphi,\mu   , \tau, n)_{_{\scriptstyle  {\bf M}}}^* $ is true.  By
the definition of  Bernstein width, the inequality  (\ref{A6.111}) holds. Thus, relation  (\ref{A6.105}) is proved.
It is easy to see that, under the condition (\ref{I_n,varphi,p_Equiv_n}), the upper and lower bounds for
the quantities $P_N (L^{\psi}(\varphi, \tau, \mu  , n)_{_{\scriptstyle  {\bf M}}}^*  ,
{\mathcal S}_{_{\scriptstyle  {\bf M}}} )$ coincide and, therefore, equalities (\ref{A6.106}) hold.

\end{proof}


\subsection{Widths of the classes   $L^{\psi }(\varphi, \mu  , \tau, \Omega )_{_{\scriptstyle  {\bf M}}}^*  $}

Let us find  the widths of the classes $L^{\psi }(\varphi, \mu  , \tau,
\Omega )_{_{\scriptstyle  {\bf M}}}^*  $   that are defined by a majorant $\Omega$ of the averaged values of  generalized moduli of smoothness.

\begin{theorem}
       \label{Th.3.3}
       Let   $\psi \in \Psi$,  condition $(\ref{M_condition})$ holds, the function
       $\varphi\in \Phi$ be non-decreasing on a certain interval $[0,a]$, $a>0$, and
       $\varphi(a)=\sup\{\varphi(t):\, t\in {\mathbb R}\}$.
       Let also  $\tau\in (0,a]$,  the function  $\mu   \in {\mathcal M}(\tau )$
       and  $\Omega  (u)$ be a fixed continuous monotonically
       increasing function of the variable   $u \ge  0$  such that $\Omega  (0)=0$ and for all $\xi>0$ and $0<u\le a $,  the condition
     \begin{equation}\label{A6.113} 
     \Omega  \Big(\frac u{\xi }\Big) \int\limits _0^{\xi \tau}\varphi_{*} (t) {\mathrm d} \mu
     \Big(\frac t{\xi }\Big)
     \le \Omega  (u)\ \int \limits _0^{\tau }\varphi(t)   {\mathrm d} \mu    (t)  ,
      \end{equation}
      is satisfied, where
     \begin{equation}\label{A6.114} 
      \varphi_{*}(t):=\left \{\begin{matrix} \varphi(t),\quad \hfill & 0\le t\le a, \\
      \varphi(a),\quad \hfill &  t\ge a.\end{matrix}\right.
      \end{equation}
 Then for any  $n\in {\mathbb N}$ and  $N\in \{2n-1,  2n\}$  the following inequalities are true:
\[
             \frac{\mu   (\tau ) - \mu   (0)}{\int_0^{\tau }\varphi(t) {\mathrm d} \mu     (t)}
            |\psi (n)|\,\,\Omega  \Big(\frac {\tau }n\Big)\le
            P_N\Big(L^{\psi }(\varphi, \tau, \mu  , \Omega)_{_{\scriptstyle  {\bf M}}}^*  , {\mathcal S}_{_{\scriptstyle  {\bf M}}}\Big)
            \]
                    \begin{equation}\label{A6.115} %
                    \le
    \frac {\mu   (\tau ) - \mu  (0)} {I_{n,\varphi}(\tau ,\mu   )}   |\psi (n)|
            \,\,\Omega  \Big(\frac {\tau }n\Big),
      \end{equation}
where the quantity $I_{n,\varphi}(\tau ,\mu   )$ is defined by $(\ref{I_n,varphi,p})$, and $P_N$
is any of the widths $b_N$, $d_N$, $\lambda _N$ or $ \pi _N$. If, in addition,  condition $(\ref{I_n,varphi,p_Equiv_n})$ holds, then
     \begin{equation}\label{A6.116} %
      P_{N}(L^{\psi } \Big(\varphi, \tau,  \mu  , \Omega  )_{_{\scriptstyle  {\bf M}}}^*  , {\mathcal S}_{_{\scriptstyle  {\bf M}}}\Big)=
      \frac{\mu   (\tau ) - \mu   (0)}{\int_0^{\tau }\varphi(t) {\mathrm d} \mu     (t)}
            |\psi (n)|\,\,\Omega  \Big(\frac {\tau }n\Big).
      \end{equation}

\end{theorem}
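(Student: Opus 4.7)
The plan is to mirror the two-sided argument of Theorem~\ref{Th.3.2}, inserting the majorant $\Omega$ in the appropriate places and relying on hypothesis (\ref{A6.113}) to absorb the extra flexibility introduced by considering $\Omega_{\varphi}(f^{\psi},\tau,\mu,u)_{_{\scriptstyle{\bf M}}}^{*}$ for every $u\in[0,\tau]$ rather than only at $u=\tau/n$. As before, the upper estimate reduces to the Jackson-type inequality of Theorem~\ref{Th.3.1}, the lower estimate to a Bernstein-ball embedding, and the transfer to all four widths follows from the monotonicity of $N$-widths in $N$ together with $b_{N}\le d_{N}\le\lambda_{N}\le\pi_{N}$.

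For the upper bound, I would take an arbitrary $f\in L^{\psi}(\varphi,\tau,\mu,\Omega)_{_{\scriptstyle{\bf M}}}^{*}$ and, evaluating the class restriction at $u=\tau/n$, apply Theorem~\ref{Th.3.1} to $f^{\psi}$ to obtain
\[
E_{n}(f)_{_{\scriptstyle{\bf M}}}^{*}\le\frac{\mu(\tau)-\mu(0)}{I_{n,\varphi}(\tau,\mu)}\,|\psi(n)|\,\Omega\!\left(\tfrac{\tau}{n}\right).
\]
Since the partial sum $S_{n-1}(f)$ realises the best approximation by (\ref{Best_app_all}) and $S_{n-1}$ is a linear projection onto $\mathscr{T}_{2n-1}$, this bound applies to $\pi_{2n-1}$; monotonicity of $N$-widths then extends it to $P_{N}$ for $N\in\{2n-1,2n\}$, which yields the right inequality of (\ref{A6.115}).

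For the lower estimate it suffices, through the same ordering, to bound $b_{2n}$ from below. I would introduce the ball
\[
B_{2n+1}=\Big\{T_{n}\in\mathscr{T}_{2n+1}:\ \|T_{n}\|_{_{\scriptstyle{\bf M}}}^{*}\le R_{n}\,\Omega(\tau/n)\Big\},\qquad R_{n}:=\frac{\mu(\tau)-\mu(0)}{\int_{0}^{\tau}\varphi(t)\,\mathrm{d}\mu(t)}\,|\psi(n)|,
\]
and prove the embedding $B_{2n+1}\subset L^{\psi}(\varphi,\tau,\mu,\Omega)_{_{\scriptstyle{\bf M}}}^{*}$. The key pointwise estimate is
\[
\omega_{\varphi}(T_{n}^{\psi},v)_{_{\scriptstyle{\bf M}}}^{*}\le\frac{\varphi_{*}(nv)}{|\psi(n)|}\,\|T_{n}\|_{_{\scriptstyle{\bf M}}}^{*},
\]
which follows from $|\widehat{T_{n}^{\psi}}(k)|\le|\widehat{T_{n}}(k)|/|\psi(n)|$ for $1\le|k|\le n$ (because $\psi\in\Psi$) together with $\varphi(kh)\le\varphi_{*}(nh)$ for $|k|\le n$ and $h\ge 0$ (because $\varphi_{*}$ is non-decreasing and $\varphi\le\varphi_{*}$). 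Inserting this into (\ref{Mean_Value_Gen_Modulus}) and performing the change of variables $s=nv$ transforms the resulting integral into the left-hand side of hypothesis (\ref{A6.113}) with $\xi=nu/\tau$; the choice is admissible since $u\in(0,\tau]\subset(0,a]$, so (\ref{A6.113}) delivers $\Omega_{\varphi}(T_{n}^{\psi},\tau,\mu,u)_{_{\scriptstyle{\bf M}}}^{*}\le\Omega(u)$ uniformly in $u\in[0,\tau]$. Consequently $b_{2n}\ge R_{n}\Omega(\tau/n)$ by the definition of Bernstein width.

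The main technical point is this last step: unlike Theorem~\ref{Th.3.2}, the embedding must be verified at every $u\in[0,\tau]$, not only at $u=\tau/n$, which is why the envelope $\varphi_{*}$ is introduced (to bound $\omega_{\varphi}(T_{n}^{\psi},v)$ once $nv$ exits the monotonicity interval $[0,a]$) and why (\ref{A6.113}) is imposed in precisely the form required by the substitution $\xi=nu/\tau$. Once the embedding is secured, the chain $b_{N}\le d_{N}\le\lambda_{N}\le\pi_{N}$ propagates the lower bound to every $P_{N}$ with $N\in\{2n-1,2n\}$, and under the additional condition (\ref{I_n,varphi,p_Equiv_n}) the upper and lower bounds in (\ref{A6.115}) coincide, yielding (\ref{A6.116}).
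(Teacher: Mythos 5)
Your proposal is correct and follows essentially the same route as the paper's own proof: the upper bound via Theorem~\ref{Th.3.1} applied at $u=\tau/n$ and the projection width $\pi_{2n-1}$, and the lower bound via the Bernstein ball of radius $\frac{\mu(\tau)-\mu(0)}{\int_0^{\tau}\varphi(t)\,\mathrm{d}\mu(t)}|\psi(n)|\,\Omega(\tau/n)$, with the embedding verified for all $u\in(0,\tau]$ using the envelope $\varphi_{*}$ and condition (\ref{A6.113}) with $\xi=nu/\tau$. The only divergence is that you state the correct ball radius, whereas the paper's display (\ref{A6.119}) carries an apparent typo ($I_{n,\varphi}(\tau,\mu)$ in place of $\int_0^{\tau}\varphi\,\mathrm{d}\mu$).
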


\begin{proof}
Based on  inequality (\ref{Jackson_Type_Ineq}), for an arbitrary function
$f\in L^{\psi }(\varphi, \tau, \mu  , \Omega)_{_{\scriptstyle  {\bf M}}}^* $
   \begin{equation}\label{A6.117} %
            E_n(f)_{_{\scriptstyle  {\bf M}}}^*  \le
                 \frac {\mu   (\tau ) - \mu  (0)}{I_{n,\varphi}(\tau ,\mu   )}
                |\psi (n)| \Omega _\varphi\Big(\frac {\tau}n\Big),
      \end{equation}%
whence, taking into account the definition of the width $\pi _{N}$ and relation (\ref{Best_app_all}), we obtain
   \begin{equation}\label{A6.118} 
          \pi _{2n-1}\Big(L^{\psi }(\varphi, \mu  , \tau, \Omega  )_{_{\scriptstyle  {\bf M}}}^*  ,
          {\mathcal S}_{_{\scriptstyle  {\bf M}}}\Big)=\sup\limits _{f\in L^{\psi }(\varphi, \tau,\mu   , \Omega)_{_{\scriptstyle  {\bf M}}}^*  }\!\!
           E_n(f)_{_{\scriptstyle  {\bf M}}}^*
             \le  \frac {\mu   (\tau ) - \mu  (0)}{I_{n,\varphi}(\tau ,\mu   )}
                |\psi (n)| \Omega _\varphi\Big(\frac {\tau}n\Big).
      \end{equation}
To obtain the necessary lower estimate, let us show that
    \begin{equation}\label{A6.119} 
    b_{2n}\Big(L^{\psi }(\varphi, \mu   , \tau, \Omega  )_{_{\scriptstyle  {\bf M}}}^*  ,{\mathcal S}_{_{\scriptstyle  {\bf M}}}\Big)\ge
     \frac {\mu   (\tau ) - \mu  (0)}{I_{n,\varphi}(\tau ,\mu   )}
    |\psi (n)| \Omega _\varphi\Big(\frac {\tau}n\Big)=: R_n^*.
      \end{equation}
For this purpose, in the $(2n+1)$-dimensional space ${\mathscr T}_{2n+1}$ of trigonometric polynomials of order $n$, consider
ball $B_{2n+1}$, whose radius is equal to the number $R_n$ defined in (\ref{A6.119}), that is,
   \[ 
   B_{2n+1}^*=\Big\{T_n\in {\mathscr T}_{2n+1} 
  : \|T_n\|_{_{\scriptstyle  {\bf M}}}^*  \le R_n^*\Big\}
  \]
and prove the validity of the  embedding  $B_{2n+1}^*\subset L^{\psi }(\varphi, \mu   , \tau, \Omega  )_{_{\scriptstyle  {\bf M}}}^* $.

Assume that $T_n\in B_{2n+1}^*$. Taking into account the non-decrease of the function
$\varphi$ on $[0,a]$  and relations  (\ref{Fourier_Coeff_der}) and (\ref{A6.114}), we have
  \[
  (\mu   (\tau ) -\mu   (0))\cdot \Omega _\varphi  (T_n^{\psi }, \tau,\mu  , u )_{_{\scriptstyle  {\bf M}}}^*
   =\int\limits _0^{u}\omega _\varphi  (T_n^{\psi }, t)_{_{\scriptstyle  {\bf M}}}^*
   {\mathrm d} \mu    \Big(\frac {\tau t}{u}\Big)
   \]
 \[
 = \int\limits _0^{\tau } \sup\limits_{0\le v\le t}\sup\Big\{
 \sum _{|k|\le n} \lambda_k\varphi(kv) \Big| \frac{\widehat T_n(k)}{\psi(k)}\Big|:\lambda\in \Lambda\Big\}  {\mathrm d} \mu    \Big(\frac {\tau t}{u}\Big)
 \]
 \[
 \le  \frac{1}{|\psi(n)|}\int\limits _0^{\tau } \varphi_*(n t)\sup\Big\{
 \sum _{|k|\le n} \lambda_k   | \widehat T_n(k)|:\lambda\in \Lambda\Big\}  {\mathrm d} \mu    \Big(\frac {\tau t}{u}\Big)
 \]
 \[
 =
  \frac {\|T_n\|_{_{\scriptstyle  {\bf M}}}^*   }
    {|\psi(n)|}
    \int\limits _0^{u} \varphi_*(n t) {\mathrm d} \mu    \Big(\frac {\tau t}{u}\Big)=
    \frac {\|T_n\|_{_{\scriptstyle  {\bf M}}}^*   }
    {|\psi(n)|}
    \int\limits _0^{nu} \varphi_*(t) {\mathrm d} \mu    \Big(\frac {\tau t}{nu}\Big).
   \]
From the inclusion of  $T_n\in B_{2n+1}^*$ and  relation (\ref{A6.113}) with $\xi =\frac {nu}{\tau} $, it follows  that
   \[
           \Omega _\varphi  (T_n^{\psi }, \tau,\mu  , u )_{_{\scriptstyle  {\bf M}}}^* \le
             \frac {\int _0^{nu} \varphi_*(t) {\mathrm d} \mu    (\frac {\tau t}{n u})}
           {\int_0^{\tau }\varphi(t) {\mathrm d} \mu     (t)}
           \Omega  \Big(\frac {\tau}n\Big)\le \Omega  (u).
    \] 
Therefore, indeed  $B_{2n+1}^*\subset L^{\psi }(\varphi, \tau,\mu   , \Omega  )_{_{\scriptstyle  {\bf M}}}^*  $
and by the definition of Bernstein width, relation (\ref{A6.119}) is true. Combining relations
(\ref{A6.110}), (\ref{A6.117}) and (\ref{A6.119}), and taking into account
monotonic non-increase of each of the widths $b_N,$  $d_N,$ $\lambda _N$ and $\pi _N$ on $N$, we get
(\ref{A6.115}).  Under the additional condition (\ref{I_n,varphi,p_Equiv_n}), the upper and lower estimates of the quantities
$ P_N\Big(L^{\psi }(\varphi,  \tau, \mu  , \Omega  )_{_{\scriptstyle  {\bf M}}}^*  , {\mathcal S}_{_{\scriptstyle  {\bf M}}}\Big)$ coinside in relation (\ref{A6.115}) and hence, equalities (\ref{A6.116}) are true.

\end{proof}

\subsection{Some corollaries and remarks}

As mentioned above, the functional classes  similar to the classes  of the kind (\ref{L^psi(varphi,n)}) and (\ref{L^psi(varphi,Phi)}) were studied by many  authors. To compare our results with the results of these studies, let us give some notations.

Consider the case where all functions $M_k(t)=t^p\Big(p^{-1/p}q^{-1/q}\Big)^p$, $p>1$, $1/p+1/q=1$.
  In this case, all functions $M^*_k(v)=v^q$, the set $\Lambda({\bf {M^*}})$ is a set of all sequences of
  positive numbers $\lambda=\{\lambda_k\}_{k\in \mathbb{Z}}$ such that  $\|\lambda\|_{_{l_q({\mathbb Z})}}\le 1$.
  Then the spaces ${\mathcal S}_{\bf M}$ coincide with the  mentioned above spaces ${\mathcal S}^{p}$   of functions $f\in L$  with the finite norm
 \begin{equation}\label{norm_Sp}
    \|f\|_{_{\scriptstyle {p}}}:=
    \|f\|_{_{\scriptstyle {\mathcal S}^p}}=\|\{\widehat f({k})\}_{{k}\in\mathbb  Z}
    \|_{l_p({\mathbb Z})}=\Big(\sum_{{ k}\in\mathbb  Z}|\widehat f({k})|^p\Big)^{1/p}, \quad 1\le p<\infty.
 \end{equation}
  As shown in \cite{Abdullayev_Chaichenko_Shidlich_2021},  for any $f\in {\mathcal S}^{p}$ and $p>1$,
    $\|f\|^\ast_{_{\scriptstyle  {\bf M}}}=\|f\|_{_{\scriptstyle {p}}}$.

In the case $p=1$, the similar equality for norms $ \| f\|^\ast_{_{\scriptstyle  {\bf M}}}=
\|f\|_{_{\scriptstyle {1}}}$
 obviously can be obtained if we consider all $M_k(u)=u$, $k\in {\mathbb Z}$, and the set
$\Lambda$ is a set of all sequences of  positive numbers $\lambda=\{\lambda_k\}_{k\in \mathbb{Z}}$ such that  $\|\lambda\|_{_{l_\infty({\mathbb Z})}}=\sup_{k\in \mathbb{Z}}\lambda_k \le 1$.

In particular, if all functions $M_k(t)=\frac {t^2}4$, then the space ${\mathcal S}_{\bf M}$ coincides with the
Lebesgue space $L_2$ of functions $f\in L$ with finite norm
 \[
 \|f\|_{_{\scriptstyle  L_2}}= \bigg(\frac 1{2\pi}\int\limits_{0}^{2\pi}|f(t)|^2
 {\mathrm d}t\bigg)^{1/2}.
 \]

In the spaces ${\mathcal S}^{p}$, we denote the generalized modulus of smoothness of a function $f$  by  $ \omega_\alpha(f,t)_{_{\scriptstyle  p}}$, and  the  best approximation of $f$ by the trigonometric polynomials    ${T}_{n-1}\in {\mathscr T}_{2n-1}$ is denoted by   $E_n (f)_{_{\scriptstyle  p}}$. By $\Omega _\varphi(f, \tau, \mu  , u)_{_{\scriptstyle  p}}$, $u>0$, we  denote the average value of the generalized modulus of smoothness
$\omega _\varphi(f, t)_{_{\scriptstyle  p}}$ of the function  $f$ with the weight $\mu   \in {\mathcal M}(\tau )$. Theorems \ref{Th.3.1},
\ref{Th.3.2} and \ref{Th.3.3} yields the following corollaries:


\begin{corollary}
       \label{Cor.3.1}
       Assume that $f\in {\mathcal S}^p$, $1\le p<\infty$,   $\varphi\in \Phi$, $\tau>0$,  $\mu  \in  {\mathcal M}(\tau )$
       and $\psi\in \Psi$.     Then for any   $n\in {{\mathbb N}}$ the following inequality is true:
 \begin{equation}\label{Jackson_Type_Ineq_Sp}
    E_n(f)_{_{\scriptstyle {p}}} \le
     \frac {\mu   (\tau ) - \mu   (0)}
    {I_{n,\varphi }(\tau ,\mu   )}  |\psi  (n)|\,
    \Omega _\varphi\Big(f^{\psi}, \tau,\mu   , \frac{\tau }n\Big)_{_{\scriptstyle {p}}},
 \end{equation}
where   the quantity $I_{n,\varphi}(\tau ,\mu   )$ is defined by $(\ref{I_n,varphi,p})$.
If, in addition, the function $\varphi$ is non-decreasing on
$[0,\tau]$  and condition $(\ref{I_n,varphi,p_Equiv_n})$ holds, then  inequality $(\ref{Jackson_Type_Ineq_Sp})$ can not be improved and therefore,
 \begin{equation}
       \label{Jackson_Type_Exact_1}
       \mathop {\sup\limits _{f\in L^{\psi}{\mathcal S}^p}}\limits _{f\not ={\rm const }}
       \frac {E_n(f)_{_{\scriptstyle {p}}} }
       {\Omega_\varphi (f^{\psi }, \tau, \mu  , \frac{\tau }{n} )_{_{\scriptstyle {p}}} }=
        \frac {\mu  (\tau ) - \mu   (0)}{\int_0^{\tau }\varphi (t) {\mathrm d} \mu     (t)} |\psi  (n)|.
 \end{equation}

\end{corollary}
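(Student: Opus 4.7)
My plan is to derive Corollary~\ref{Cor.3.1} by specializing Theorem~\ref{Th.3.1} to the Musielak--Orlicz sequence that realizes $\mathcal{S}^p$ as a concrete instance of $\mathcal{S}_{\bf M}$. For $p>1$ I would choose $M_k(t)=(p^{-1/p}q^{-1/q})^p\,t^p$ for every $k\in\mathbb{Z}$; a direct Young-duality calculation shows that the constant is tuned precisely so that $M_k^*(v)=v^q$, with the usual $1/q$ factor eliminated. Consequently $\Lambda({\bf M}^*)$ is the unit ball of $\ell_q(\mathbb{Z})$, and definition (\ref{def-Orlicz-norm}) reduces through the H\"older duality $\|\{a_k\}\|_{\ell_p}=\sup\{\sum\lambda_k|a_k|:\|\lambda\|_{\ell_q}\le 1\}$ to the identification $\|f\|_{\bf M}^*=\|f\|_p$ already noted just above. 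For $p=1$ I would take $M_k(u)=u$; then $\Lambda$ becomes the $\ell_\infty$-unit ball and the same observation yields $\|f\|_{\bf M}^*=\|f\|_1$.

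Under these identifications, every object appearing in Theorem~\ref{Th.3.1} transcribes into its $\mathcal{S}^p$-analogue: $\omega_\varphi(f,t)_{\bf M}^*$ becomes $\omega_\varphi(f,t)_p$, $\Omega_\varphi(f,\tau,\mu,u)_{\bf M}^*$ becomes $\Omega_\varphi(f,\tau,\mu,u)_p$, and $E_n(f)_{\bf M}^*$ becomes $E_n(f)_p$. Hence inequality (\ref{Jackson_Type_Ineq}) is precisely (\ref{Jackson_Type_Ineq_Sp}) and the sharpness assertion (\ref{Jackson_Type_Exact}) is precisely (\ref{Jackson_Type_Exact_1}), once the hypotheses of Theorem~\ref{Th.3.1} are verified in the chosen specialization.

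The main obstacle will be checking condition (\ref{M_condition}) in the $p=1$ regime. For $p>1$ it is immediate, since $M_k^*(v)=v^q$ is strictly increasing with $M_k^*(1)=1$. For $p=1$, however, $M_k^*$ is the $\{0,+\infty\}$-valued convex conjugate of the identity, so (\ref{M_condition}) fails literally. I would handle this by inspecting where (\ref{M_condition}) actually enters the proof of Theorem~\ref{Th.3.1}: it is invoked only in (\ref{A6.95}), to ensure that $\sup\{|\varepsilon_{-n}|\lambda_{-n}+|\varepsilon_n|\lambda_n:\lambda\in\Lambda\}=1$ on the two-point extremal configuration. When $\Lambda$ is the $\ell_\infty$-ball that supremum is visibly $|\varepsilon_n|+|\varepsilon_{-n}|=1$, so the extremal function construction still delivers the sharp constant. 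With this observation the whole argument of Theorem~\ref{Th.3.1} applies uniformly for $1\le p<\infty$, and Corollary~\ref{Cor.3.1} follows.
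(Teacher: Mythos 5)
Your proposal is correct and follows essentially the same route the paper intends: Corollary~\ref{Cor.3.1} is obtained by specializing Theorem~\ref{Th.3.1} via exactly the choices $M_k(t)=(p^{-1/p}q^{-1/q})^p\,t^p$ (so that $M_k^*(v)=v^q$, $\Lambda$ is the positive part of the $\ell_q$ unit ball, and $\|f\|_{\bf M}^*=\|f\|_p$) for $p>1$, and $M_k(u)=u$ for $p=1$. Your extra observation --- that condition~(\ref{M_condition}) fails literally for $p=1$ (there $M_k^*$ takes only the values $0$ and $+\infty$, so $M_k^*(1)=0$) but that its only role, namely forcing $\sup\{\lambda_{\pm n}:\lambda\in\Lambda\}=1$ in~(\ref{A6.95}) and in the evaluation of $E_n(f_n)_{_{\scriptstyle \bf M}}^*$, is still available for the $\ell_\infty$ ball --- is a genuine point that the paper passes over silently, and it is a worthwhile patch.
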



\begin{corollary} \label{Cor.3.2}
       Assume that  $1\le p<\infty$,  $\psi \in \Psi$, $\tau>0$,   the function
       $\varphi\in \Phi$ is non-decreasing on the interval $[0,\tau]$ and  $\mu   \in {\mathcal M}(\tau )$.
       Then for any  $n\in {\mathbb N}$ and  $N\in \{2n-1,  2n\}$  the following inequalities are true:
\begin{equation}\label{A6.1051}
    \frac{\mu   (\tau ) - \mu   (0)}
   {\int_0^{\tau }\varphi (t) {\mathrm d} \mu     (t)} |\psi (n)|
   \le P_{N} \Big(L^{\psi }(\varphi, \tau, \mu  , n)_{_{\scriptstyle {p}}}  , {\mathcal S}^p \Big)
    \le  \frac {\mu   (\tau ) - \mu  (0)}
   {I_{n,\varphi}(\tau ,\mu   )}    |\psi (n)|,
  \end{equation}
where the quantity $I_{n,\varphi}(\tau ,\mu   )$ is defined by $(\ref{I_n,varphi,p})$, and $P_N$
is any of the widths $b_N$, $d_N$, $\lambda _N$ or $ \pi _N$. If, in addition, condition $(\ref{I_n,varphi,p_Equiv_n})$ holds, then
    \begin{equation}\label{A6.1061} 
          P_{N} \Big(L^{\psi }(\varphi, \tau, \mu  , n)_{_{\scriptstyle {p}}}  , {\mathcal S}^p\Big )=
           \frac{\mu   (\tau ) - \mu   (0)}
          {\int_0^{\tau }\varphi(t) {\mathrm d} \mu     (t)} |\psi (n)|.
      \end{equation}
\end{corollary}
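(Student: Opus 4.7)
The plan is to read off Corollary~\ref{Cor.3.2} as a direct specialization of Theorem~\ref{Th.3.2} to the particular Musielak-Orlicz data that recover the spaces $\mathcal{S}^p$. The preceding subsection has already supplied the precise identification: for $1<p<\infty$, taking $M_k(t)=\bigl(p^{-1/p}q^{-1/q}\bigr)^p t^p$ for every $k\in\mathbb{Z}$ yields $\mathcal{S}_{\bf M}=\mathcal{S}^p$ together with the equality of norms $\|\cdot\|^*_{_{\scriptstyle {\bf M}}}=\|\cdot\|_{_{\scriptstyle p}}$, and for $p=1$ the analogous identification is obtained via $M_k(u)=u$ with the $l_\infty$ convention for $\Lambda$. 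Under this identification the classes $L^{\psi}(\varphi,\tau,\mu,n)^*_{_{\scriptstyle {\bf M}}}$, the generalized moduli $\omega_\varphi$ and their averaged values $\Omega_\varphi$, and the widths $b_N$, $d_N$, $\lambda_N$, $\pi_N$ all transfer verbatim to their $\mathcal{S}^p$-counterparts.

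First, I would verify hypothesis~(\ref{M_condition}) for this choice of ${\bf M}$. For $1<p<\infty$ a direct Legendre-transform computation gives $M_k^*(v)=v^q/\bigl(q(c_p p)^{q-1}\bigr)$ with $c_p=(p^{-1/p}q^{-1/q})^p$; using $q=p/(p-1)$ one checks $q(c_p p)^{q-1}=1$, so $M_k^*(v)=v^q$, which gives $M_k^*(1)=1$ and $M_k^*(v)>1$ for $v>1$, exactly the content of~(\ref{M_condition}). The $p=1$ case is handled by the analogous remark preceding the corollary. At this point every hypothesis of Theorem~\ref{Th.3.2} is in force.

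Next, I would invoke Theorem~\ref{Th.3.2} with the data above. The two-sided estimate~(\ref{A6.105}) then becomes~(\ref{A6.1051}) after the cosmetic replacement of $\|\cdot\|^*_{_{\scriptstyle {\bf M}}}$ by $\|\cdot\|_{_{\scriptstyle p}}$ and of $\mathcal{S}_{_{\scriptstyle {\bf M}}}$ by $\mathcal{S}^p$, and the equality~(\ref{A6.106}) translates into~(\ref{A6.1061}) under the additional hypothesis~(\ref{I_n,varphi,p_Equiv_n}). Because $I_{n,\varphi}(\tau,\mu)$ and the quantity $\mu(\tau)-\mu(0)$ depend only on $\varphi$, $\mu$, $\tau$, and $n$ (not on $p$), both sides of the asserted inequalities keep their form.

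The only real step is the bookkeeping check that $M_k^*(v)=v^q$, i.e.\ that the normalization constant $(p^{-1/p}q^{-1/q})^p$ is the one that makes $M_k^*(1)=1$ hold exactly; once this is in place there is no further obstacle, since the substantive analytic work (the upper Jackson-type bound via Theorem~\ref{Th.3.1} and the lower Bernstein-type bound via embedding of a ball $B_{2n+1}$) has already been carried out in Theorem~\ref{Th.3.2}. Hence the corollary is an immediate consequence, and I would state the proof accordingly in one short paragraph.
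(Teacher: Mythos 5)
Your proposal is correct and matches the paper's own (essentially implicit) argument: the paper simply states that Theorems~\ref{Th.3.1}--\ref{Th.3.3} yield the corollaries via the identification $M_k(t)=\bigl(p^{-1/p}q^{-1/q}\bigr)^p t^p$ (so that $M_k^*(v)=v^q$, $\|\cdot\|^*_{_{\scriptstyle \bf M}}=\|\cdot\|_{_{\scriptstyle p}}$ and condition~(\ref{M_condition}) holds), with the $p=1$ case handled by the separate remark defining $\Lambda$ through the $l_\infty$ ball. Your explicit Legendre-transform check of the normalization is a useful verification the paper leaves to the reader, but the route is the same.
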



\begin{corollary}
       \label{Cor.3.3}
       Let  $1\le p<\infty$, $\psi \in \Psi$,   the function
       $\varphi\in \Phi$ be non-decreasing on a certain interval $[0,a]$, $a>0$, and
       $\varphi(a)=\sup\{\varphi(t):\, t\in {\mathbb R}\}$.
       Let also  $\tau\in (0,a]$,  the function  $\mu   \in {\mathcal M}(\tau )$
       and for all $\xi>0$ and $0<u\le a $, $\Omega  (u)$ be a fixed continuous monotonically
       increasing function of the variable   $u \ge  0$  such that $\Omega  (0)=0$ and   condition $(\ref{A6.113})$.
      is satisfied.  Then for any  $n\in {\mathbb N}$ and  $N\in \{2n-1,  2n\}$  the following inequalities are true:
\[
             \frac{\mu   (\tau ) - \mu   (0)}{\int_0^{\tau }\varphi(t) {\mathrm d} \mu     (t)}
            |\psi (n)|\,\,\Omega  \Big(\frac {\tau }n\Big)\le
            P_N\Big(L^{\psi }(\varphi, \tau, \mu  , \Omega)_{_{\scriptstyle {p}}}  , {\mathcal S}^p\Big)
            \]
                   \begin{equation}\label{A6.1151} %
                   \le
    \frac {\mu   (\tau ) - \mu  (0)} {I_{n,\varphi}(\tau ,\mu   )}   |\psi (n)|
            \,\,\Omega  \Big(\frac {\tau }n\Big),
      \end{equation}
where the quantity $I_{n,\varphi}(\tau ,\mu   )$ is defined by $(\ref{I_n,varphi,p})$, and $P_N$
is any of the widths $b_N$, $d_N$, $\lambda _N$ or $ \pi _N$. If, in addition,  condition $(\ref{I_n,varphi,p_Equiv_n})$ holds, then
     \begin{equation}\label{A6.1161} %
      P_{N}(L^{\psi } \Big(\varphi, \tau,  \mu  , \Omega  )_{_{\scriptstyle {p}}}  , {\mathcal S}^p\Big)=
      \frac{\mu   (\tau ) - \mu   (0)}{\int_0^{\tau }\varphi(t) {\mathrm d} \mu     (t)}
            |\psi (n)|\,\,\Omega  \Big(\frac {\tau }n\Big).
      \end{equation}

\end{corollary}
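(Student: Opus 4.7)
The plan is to obtain Corollary~\ref{Cor.3.3} as a direct specialization of Theorem~\ref{Th.3.3}, not via a new argument but via the explicit identification of the spaces ${\mathcal S}^p$ with particular Musielak--Orlicz type spaces ${\mathcal S}_{\bf M}$ that was already set up in the paragraphs preceding the corollaries. The key point is that every hypothesis of Corollary~\ref{Cor.3.3}, together with the auxiliary hypothesis (\ref{M_condition}) missing from its statement, can be verified for the chosen sequence~${\bf M}$.

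First, I would fix the choice of ${\bf M}$: for $p>1$, take $M_k(t)=t^p(p^{-1/p}q^{-1/q})^p$ for every $k\in{\mathbb Z}$, with $1/p+1/q=1$; then $M_k^*(v)=v^q$, the set $\Lambda({\bf M}^*)$ is exactly the closed unit ball of $l_q({\mathbb Z})$, and the discussion preceding the corollary gives $\|f\|_{_{\bf M}}^*=\|f\|_p$ by the standard $l_p$--$l_q$ duality. For $p=1$, take $M_k(u)=u$, so that $\Lambda$ is the unit ball of $l_\infty({\mathbb Z})$ and $\|f\|_{_{\bf M}}^*=\|f\|_1$. In both cases I would check condition (\ref{M_condition}): for $p>1$ this is immediate since $M_k^*(v)=v^q$ is strictly increasing with $M_k^*(1)=1$, and for $p=1$ it is handled within the normalization already used by the authors to match $\|\cdot\|_1$.

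Under this identification, the generalized modulus $\omega_\varphi(\cdot,t)_{_{\bf M}}^*$ becomes $\omega_\varphi(\cdot,t)_p$, the averaged modulus $\Omega_\varphi(\cdot,\tau,\mu,u)_{_{\bf M}}^*$ becomes $\Omega_\varphi(\cdot,\tau,\mu,u)_p$, and the class $L^{\psi}(\varphi,\tau,\mu,\Omega)_{_{\bf M}}^*$ coincides termwise with $L^{\psi}(\varphi,\tau,\mu,\Omega)_p$; in particular the widths $b_N,d_N,\lambda_N,\pi_N$ of $L^{\psi}(\varphi,\tau,\mu,\Omega)_p$ in ${\mathcal S}^p$ are the same as those of $L^{\psi}(\varphi,\tau,\mu,\Omega)_{_{\bf M}}^*$ in ${\mathcal S}_{_{\bf M}}$. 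The hypotheses on $\psi\in\Psi$, $\varphi\in\Phi$ non-decreasing on $[0,a]$ with $\varphi(a)=\sup\varphi$, on $\tau\in(0,a]$, $\mu\in{\mathcal M}(\tau)$, and on the majorant $\Omega$ satisfying (\ref{A6.113}) are exactly those required by Theorem~\ref{Th.3.3}, so applying that theorem in ${\mathcal S}_{_{\bf M}}$ and reading the conclusion through the identification yields the bound (\ref{A6.1151}); under the additional assumption (\ref{I_n,varphi,p_Equiv_n}) the upper and lower estimates coincide, which gives (\ref{A6.1161}).

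There is no real obstacle: once the dictionary between ${\mathcal S}^p$ and the corresponding ${\mathcal S}_{_{\bf M}}$ is in place, Corollary~\ref{Cor.3.3} is a transcription of Theorem~\ref{Th.3.3}. The only mildly delicate point is the $p=1$ endpoint, where one must use the author's convention for $M_k(u)=u$ so that the duality with $l_\infty$ still produces the stated norm identity and the substitute for (\ref{M_condition}) holds; apart from that, the proof is a one-line reference.
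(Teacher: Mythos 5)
Your proposal matches the paper exactly: the authors derive Corollary~\ref{Cor.3.3} with no separate argument, simply by invoking Theorem~\ref{Th.3.3} under the identification $M_k(t)=t^p(p^{-1/p}q^{-1/q})^p$ (resp.\ $M_k(u)=u$ for $p=1$), which gives $\|f\|^\ast_{_{\scriptstyle \bf M}}=\|f\|_p$ and hence the termwise coincidence of the moduli, classes, and widths. Your explicit check of condition~(\ref{M_condition}) for $p>1$ and your flagging of the $p=1$ endpoint are, if anything, more careful than the paper, which leaves both points implicit.
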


Let us note that in ${\mathcal S}^p$, the statements similar to Corollaries \ref{Cor.3.1}-\ref{Cor.3.3} were also obtained in \cite{Abdullayev_Serdyuk_Shidlich_2021} and \cite{Serdyuk_2003}. In \cite{Abdullayev_Serdyuk_Shidlich_2021},
instead of the  average values
$\Omega _\varphi(f, \tau, \mu  , u)_{_{\scriptstyle  p}}$ of the generalized moduli of smoothness
$\omega _\varphi(f, t)_{_{\scriptstyle  p}}$, the authors considered the quantities
 \begin{equation}\label{Mean_Value_Gen_Modulus_Sp}
  \Omega _\varphi(f, \tau, \mu ,s , u )_{_{\scriptstyle  p}}  =
   \Big(\frac{1}{\mu    (\tau ) - \mu    (0)}\int 
   _0^u\omega _\varphi^s(f, t)_{_{\scriptstyle  p}}  {\mathrm d}\mu    \Big(\frac {\tau
   t}{u}\Big)\Big)^{1/s}
 \end{equation}
 in the case where $s=p$. In  \cite{Serdyuk_2003}, the quantities $\Omega _\varphi(f, \tau, \mu,p  , u )_{_{\scriptstyle  p}} $
  were considered when the function $\varphi(t)=\varphi_\alpha(t)=2^\frac \alpha 2 (1-\cos{kt})^\frac \alpha2$.  In the case $s=p=1$ and $\psi\in \Psi$, the results of  Corollaries \ref{Cor.3.1}-\ref{Cor.3.3} coincide with the corresponding results of \cite{Abdullayev_Serdyuk_Shidlich_2021}.

 In the case  $p=2$,  $\psi (k) = ({\mathrm i}k)^{-r}$, $r=0, 1,\ldots$ and  $\mu  _1(t) = 1 - \cos t$,  equalities of the kind as  (\ref{Jackson_Type_Ineq_Sp})
 with the quantities $\Omega _{\varphi_\alpha}(f, \tau, \mu  _1,2  , u)_{_{\scriptstyle  2}}$  follow from  the result of Chernykh  \cite{Chernykh_1967_2} when $\alpha=1$, and from results of   Yussef \cite{Yussef_1988}
 when $\alpha=k\in {{\mathbb N}}$ and $n\in {\mathbb N}$. For the weight function $\mu  _2(t)=t$ and
 $\psi (k) = ({\mathrm i}k)^{-r}$, $r=0, 1,\ldots$,    equalities of the kind as  (\ref{Jackson_Type_Ineq_Sp})
 with the quantities $\Omega _{\varphi_\alpha}(f, \tau, \mu  _2,2  , u)_{_{\scriptstyle  2}}$ were obtained by
 Taikov  \cite{Taikov_1976, Taikov_1979}  ($k=1$ or $r\ge 1/2$ and $k \in {{\mathbb N}}$).

The widths of the classes
   \[
   L^{\psi }(\varphi, \tau, \mu,s   , \Omega )_{_{\scriptstyle  p}}=  \Big\{f\in L^{\psi }{\mathcal S}^p:\
    \Omega _\varphi(f^{\psi }, \tau , \mu ,s  ,  u)_{_{\scriptstyle  p}}  \le \Omega  (u),\ 0\le u\le \tau \Big\},
  \]
when $p=s=2,$ $\mu (t)=\mu  _2(t)=t$, $\varphi(t)=\varphi_\alpha(t)=2^\frac \alpha 2 (1-\cos{kt})^\frac \alpha2$,  $\psi (k)=({\mathrm i}k)^{-r}$, for $r\ge 0$ and $\alpha=1$ or $r\ge 1/2$ and $\alpha\in {{\mathbb N}},$  were obtained in  \cite{Taikov_1976, Taikov_1979} (see also \cite[Ch. 4]{Pinkus_1985}), where the existence
 of functions $\Omega$ satisfying condition similar to $(\ref{A6.113})$ was also proved.


\footnotesize


\end{document}